\documentclass[12pt]{amsart}

\usepackage{amsmath,amscd,amsthm,amssymb,amsxtra,latexsym,epsfig,epic,graphics,mathdots,amssymb, enumerate}
\usepackage[all]{xy}
\SelectTips{cm}{11}
\usepackage[utf8]{inputenc}
\usepackage[svgnames]{xcolor}
\usepackage{tcolorbox}
\tcbuselibrary{skins,listings,theorems}
\usepackage{hyperref}

\usepackage{cite}
\makeatletter
\newcommand{\citecomment}[2][]{\citenum{#2}#1\citevar}
\newcommand{\citeone}[1]{\citecomment{#1}}
\newcommand{\citetwo}[2][]{\citecomment[,~#1]{#2}}
\newcommand{\citevar}{\@ifnextchar\bgroup{;~\citeone}{\@ifnextchar[{;~\citetwo}{]}}}
\newcommand{\citefirst}{\@ifnextchar\bgroup{\citeone}{\@ifnextchar[{\citetwo}{]}}}
\newcommand{\cites}{[\citefirst}
\makeatother

\usepackage[all]{xy}


\newtheorem{thm}{Theorem}

\newtheorem{prop}[thm]{Proposition}

   
\theoremstyle{definition}
\newtheorem{defn}[thm]{Definition}

\newtheorem{exmp}[thm]{Example}

\newtheorem{ques}[thm]{Question}    

\newtheorem{rem}[thm]{Remark}          

\newtheorem*{ack}{Acknowledgments}      

\newtheorem{defn-thm}[thm]{Definition--Theorem}  
\newtheorem{defn-lem}[thm]{Definition--Lemma}  

\theoremstyle{remark}


\setcounter{section}{0}

\renewcommand{\c}[0]{{\mathbb C}}

\newcommand{\z}[0]{{\mathbb Z}}


\newcommand{\p}[0]{{\mathbb P}}

\newcommand{\pic}[0]{\operatorname{Pic}}

\newcommand{\Gr}[0]{\mathrm{Gr}}

\newcommand{\ext}[0]{\operatorname{Ext}}    
\newcommand{\Hom}[0]{\operatorname{Hom}}






\def\loccoh#1.#2.#3.#4.{H^{#1}_{#2}(#3,#4)}

\DeclareMathAlphabet{\mathchanc}{OT1}{pzc}%
                                {m}{it}



\begin{document}
\bibliographystyle{amsalpha}



\title[Resonance, syzygies and Ulrich bundles on $V_5$]{Resonance, syzygies, and rank--$3$ Ulrich bundles on the del Pezzo threefold $V_5$}
\author{Marian Aprodu}
\address{University of Bucharest, Department of Mathematics \& Institute of Mathematics ``Simion Stoilow'' of the Romanian Academy}
\email{marian.aprodu@fmi.unibuc.ro \\ marian.aprodu@imar.ro}

\author{Yeongrak Kim}
\address{Department of Mathematics \& Institute of Mathematical Science, Pusan National University, 2 Busandaehak-ro 63beon-gil, Geumjeong-gu, 46241 Busan, Korea}
\email{yeongrak.kim@pusan.ac.kr}

\begin{abstract}
We investigate a geometric criterion for a smooth curve $C$ of genus $14$ and degree $18$ to be described as the zero locus of sections in an Ulrich bundle of rank $3$ on a del Pezzo threefold $V_5 \subset \p^6$. The main challenge is to read off the Pfaffian quadrics defining $V_5$ from geometric structures of $C$. We find that this problem is related to the existence of a special rank-two vector bundle on $C$ with trivial resonance.
It gives a description of the image of the rational map that appeared in a work of Ciliberto-Flamini-Knutsen, for the case of degree $5$ del Pezzo threefolds.
From an explicit calculation of the Betti table of such a curve, we also deduce the uniqueness of the del Pezzo threefold containing a given curve.
\end{abstract}

\dedicatory{Dedicated to the memory of Gianfranco Casnati}

\keywords{Ulrich bundle, quintic del Pezzo threefold, Resonance variety}
\subjclass[2020]{Primary 14J60, Secondary 13D02, 14M15, 14H10}

\maketitle

\section{Introduction}
Let $X \subset \p^N$ be a projective variety of dimension $n$. A coherent sheaf $\mathcal E$ on $X$ is called an \emph{Ulrich sheaf} if there is a finite linear projection $\pi : \p^N \dashrightarrow \p^n$ which is defined over $X$ such that $\pi_{\ast} \mathcal E \simeq \mathcal O_{\p^n}^{\oplus k}$ for some positive integer $k$ \cite{ES03}. In commutative algebra, an analogous object often called an \emph{Ulrich module} has appeared in the form of maximal Cohen-Macaulay modules having the maximal number of generators, and has been prominently featured in modern mathematics due to its various spectrum of applications, see for instance \cite{Ulr84, BGS87, IMW22, Ma23}. Eisenbud and Schreyer transferred this concept in the geometric setup and discovered a remarkable connection with Cayley--Chow forms \cite{ES03}; see also \cite{Bea18, CMP21} for recent developments from the geometric perspective and further applications. 

The most significant open problem in the Ulrich sheaf theory is the \emph{Eisenbud--Schreyer conjecture} which predicts that any smooth projective variety must carry an Ulrich bundle. If true, this conjecture would have dramatic consequences on the expected shapes of cohomology tables \cite{ES03,ES11}. The next step, assuming the existence was established, is to determine the possible ranks of Ulrich bundles and classify them. 
While all these problems are widely open, they have been resolved in a number of relevant cases, in particular, for varieties of minimal degree \cite{AHMP19, ES03}.
Hence, it is natural to raise the same questions in the next case, namely, for del Pezzo varieties. These problems are rather straightforward for smooth del Pezzo varieties of dimension $n=1, 2$ \cites[Corollary 4.5, 6.5]{ES03}[Proposition 5]{Bea18}. More recently, the answer was also given for del Pezzo threefolds of degree $d$ as in various case studies: \cite{Bea00, CH12, LMS15} for $d=3$; \cite{CKL21} for $d=4$; \cite{Fae05, LP21} for $d=5$; \cite{CFM16} and \cite{CFM18} for two cases of $d=6$; \cite{CFM17} for $d=7$; and \cite[Corollary 5.3, Proposition 5.11]{ES03} for $d=8$. In particular, $X$ carries Ulrich bundles of every rank $r \ge 2$ unless $X = \nu_2 (\p^3) \subset \p^9$ is the $2$-uple embedding of $\p^3$ where it only carries Ulrich bundles of even ranks. Very recently, Ciliberto, Flamini, and Knutsen found a single deformation argument for constructing Ulrich bundles of rank $2$ and $3$ over every del Pezzo threefold of degree $3 \le d \le 7$ \cite{CFK23}. 

There were several attempts to construct Ulrich bundles directly using the Hartshorne-Serre correspondence, for example, \cite{AC00} over a del Pezzo threefold of Picard number $1$ and \cite[Proposition 8]{Bea18} over every del Pezzo threefold for rank $2$ Ulrich bundles. The problem translates into the existence of an elliptic normal curve of degree $d+2$ contained in a del Pezzo threefold $V_d \subset \p^{d+1}$ of degree $d$. For rank--$3$ Ulrich bundles, we refer to \cite{CH12} for $d=3$, and to an unpublished additional note to \cite{CKL21} for $d=4$. In this case, we need to look for an arithmetically Cohen-Macaulay curve $C \subset V_d \subset \p^{d+1}$ of genus $2d+4$ and degree $3d+3$ such that the twisted canonical line bundle $\omega_C (-1)$ has two global sections which generate the module of twisted global sections $H^0_{\ast} (\omega_C) = \bigoplus_{j \in \z} H^0 (C, \omega_C(j))$. In loc. cit., the authors showed that a general curve $C$ of genus $2d+4$ with $d=3$ or $d=4$ admits an embedding $C \hookrightarrow V_d \subset \p^{d+1}$ satisfying these conditions, and the incidence set $\{ (C, V_d) ~ \vert ~ C \subset V_d \subset \p^{d+1}\}$ dominates the family of del Pezzo threefolds $\{V_d\}$ of degree $d$. In particular, a general del Pezzo threefold of degree $d$ has an Ulrich bundle of rank $3$. We remark that the results of \cite{CH12, CKL21} are deeply related to the rationality problem for the Hurwitz spaces $\mathcal H (k, 2g+2k-2)$ and the moduli of smooth curves $\mathcal M_{g}$ as in computational approaches developed by Gei{\ss} and Schreyer \cites{Gei12}[Appendix]{CH12}. 

The main purpose of this paper is to answer the analogous question for Ulrich bundles of rank $3$ on a del Pezzo threefold $V_5 \subset \p^6$ of degree $5$. 
As pointed out in \cite[Section 5]{CFK23}, a general curve of genus $14$ in $\p^6$ cannot be associated to an Ulrich bundle of rank $3$ on $V_5$, and this fact represents a significant difference from the cases $d=3$ and $d=4$. On the other hand, any model of $V_5 \subset \p^6$ has an Ulrich bundle $\mathcal E$ of rank $3$, and two generic global sections $s_1, s_2$ will give rise an arithmetically Cohen-Macaulay (ACM for short) curve $C \subset V_5 \subset \p^6$ of genus $14$, degree $18$ as the zero locus of $s_1 \wedge s_2$. Hence, it is natural to inquire about the geometry of the sublocus of $\mathcal M_{14}$ which parametrizes curves that induce Ulrich bundles of rank $3$ over some $V_5 \subset \p^6$. Following the notation in \cite[Section 5]{CFK23}, this question is the same as studying the image of $\varphi_{3,5} : \mathbb{G} \dashrightarrow \mathcal M_{14}$.

The outline of the paper is as follows. In Section \ref{sec:Rk3Ulrich} we recall the connection between rank--3 Ulrich bundles on $V_d$ and certain curves contained in $V_d$. For $d=5$, which is the focus of the present work, we compute the Betti table of these curves as in Proposition \ref{prop:BettiOfACMCurve}. Indeed, we provide an equivalent condition in terms of Koszul cohomology groups of $C \subset V_5 \subset \p^6$ for $C$ to arise from a rank--$3$ Ulrich bundle on (a model of) $V_5$.
Section \ref{sec:Res} is devoted to the main result of the paper, Theorem \ref{thm:MainThmV5}. It shows that the source of an embedding in $V_5$ of a curve $C$ of genus $14$ and degree $18$ in $\mathbb P^6$ is the existence of a rank--$2$ vector bundle on $C$ with $5$-dimensional space of global sections, determinant equal to $\mathcal{O}_C(1)$ and trivial resonance. In fact, the theorem proves that a rank--$2$ bundle with determinant $\mathcal{O}_C(1)$ and exactly five independent global sections gives an embedding of the curve in a linear section of the Grassmannian, and this section is smooth if and only if the resonance of the given bundle is trivial. For the proof of Theorem \ref{thm:MainThmV5} we need an algebraic characterization of the triviality of the resonance in terms of generalized Pfaffians, which is given in Theorem \ref{thm:ResonancePfaffianLemma}. We conclude the paper with some remarks on the main theorem.

\medskip

Throughout this paper, we work over $\c$ for simplicity.

\begin{ack}
 M. A. was partly supported by the PNRR	grant CF 44/14.11.2022 \emph{Cohomological Hall algebras of smooth surfaces and applications.} Y. K. is supported by Basic Science Research Program of the NRF of Korea (NRF-2022R1C1C1010052). The authors have greatly benefited from discussions with Gavril Farkas, to whom they express their special thanks. Y. K. thanks Mateusz Michalek for his hospitality and helpful discussion. 
\end{ack}

\section{Rank $3$ Ulrich bundles on del Pezzo threefolds of Picard number $1$}
\label{sec:Rk3Ulrich}

In this section, we recall the construction of rank--$3$ Ulrich bundles on del Pezzo threefolds of Picard number $1$. 

We start with the general definition of Ulrich bundles on a (smooth) projective variety $X \subset \mathbb{P}^N$. Note that it depends on an embedding i.e. on the choice of a polarization $(X, \mathcal O_X(1))$ where $\mathcal O_X(1) = \mathcal O_{\p^N}(1) \vert_X$ is a very ample line bundle on $X$.

\begin{defn}
Let $X \subseteq \mathbb{P}^N$ be a projective variety of dimension $n$, and let $\mathcal E$ be a nonzero coherent sheaf on $X$. 
\begin{enumerate}[(i)]
\item $\mathcal E$ is called \emph{ACM} if $\mathcal E$ is locally Cohen-Macaulay, and has no intermediate cohomology, that is, 
\[
H^i (X, \mathcal E(-j))=0
\]
for every $0<i<n$ and $j \in \z$.
\item $\mathcal E$ is called \emph{Ulrich} if $\mathcal E$ has the same cohomological behavior as the structure sheaf of $\p^n$, that is, 
\[
H^i (X, \mathcal E(-j)) = 0
\]
for every $i \in \z$ and $1 \le j \le n$.
\end{enumerate}
\end{defn}

This definition admits several variants, see for instance \cite[Proposition 2.1]{ES03}. We refer to \cite[Section 2]{CH12} for basic properties of Ulrich sheaves. Note that any Ulrich sheaf over a smooth projective variety is locally free, and hence in the smooth case it makes sense to speak about Ulrich bundles.

An $n$-dimensional projective variety $X \subseteq \mathbb{P}^N$ is called a \emph{del Pezzo variety} if $\omega_X \simeq \mathcal O_X (1 - \dim X)$. It is known that there are exactly four types of smooth del Pezzo threefolds of Picard number $1$, namely,
\begin{itemize}
\item  $V_3 \subset \p^4$ a cubic threefold;
\item  $V_4 \subset \p^5$ a complete intersection of two quadrics;
\item  $V_5 \subset \p^6$ a $3$-dimensional linear section of the Grassmannian $\Gr(2,5) \subset \p^9$;
\item  $v_2 (\p^3) \subset \p^9$ a $2$-uple embedding of $\p^3$.
\end{itemize}

The Ulrich geography for the Veronese threefold in $\mathbb P^9$ is very simple, it has a unique Ulrich bundle of rank $2$ and no Ulrich bundle of odd ranks \cite[Proposition 5.11, Corollary 5.3]{ES03}.

The remaining cases $V_d \subseteq \mathbb{P}^{d+1}$ are del Pezzo threefolds of degree $d=3,4,5$ embedded by a very ample line bundle $\mathcal O_{V_d}(1)$ which is a generator of the Picard group $\pic (V_d)$ in each of the cases. Arrondo and Costa classified possible Chern classes of ACM bundles of small ranks over $V_d$ \cite[Theorem 3.4, 4.9]{AC00}, and constructed some of them in their list. For instance, a general global section of an Ulrich bundle $\mathcal E$ of rank $2$ over $V_d$ degenerates along an elliptic normal curve in $\mathbb{P}^{d+1}$, which is of degree $d+2$. 

By the Hartshorne-Serre correspondence (see \cite{Arr07} for more details), the existence of rank--$2$ Ulrich bundles over $V_d$ reduces to the question whether or not there exist elliptic normal curves contained in $V_d$. We refer to \cites[Example 3.3]{AC00}[Proposition 8]{Bea18}~ for the construction of rank--$2$ Ulrich bundles on $V_d$ in this manner. 

Similarly, the existence problem for rank--$3$ Ulrich bundles over $V_d$ is translated in terms of the existence of certain curves lying on $V_d$. Let $\mathcal E$ be an Ulrich bundle of rank $3$ on a del Pezzo threefold $V_d \subset \p^{d+1}$ of degree $d$, and let $V \subset H^0 (V_d, \mathcal E)$ be a general subspace of dimension $2$. The inclusion $V \otimes \mathcal O_{V_d} \to \mathcal E$ degenerates along a smooth ACM curve $C_d$ of genus $2d+4$ and degree $3d+3$ (cf. \cite[Theorem 3.5]{CFK23}) such that $\omega_{C_d} (-1)$ is generated by a $2$-dimensional space $W$ and the multiplication map $\nu : W \otimes H^0 (\p^{d+1}, \mathcal O_{\p^{d+1}} (j)) \to H^0 (\omega_{C_d} (j-1))$ has full rank for every $j \in \z$. Thanks to the base-point-free pencil trick, the last condition is satisfied when the multiplication map $W \otimes H^0 (C_d, \mathcal O_{C_d}(1)) \to H^0 (C_d, \omega_{C_d})$ is surjective.

Conversely, if there is such a curve $C_d$ embedded in $V_d$, the Hartshorne-Serre correspondence implies the existence of a rank $3$ vector bundle $\mathcal E$ which fits into the short exact sequence
\[
0 \to \mathcal O_{V_d}^{\oplus 2} \to \mathcal E \to \mathcal I_{C_d / V_d} (3) \to 0.
\]
A direct computation shows that $\mathcal E$ is an Ulrich bundle on $V_d$. We remark that the generating condition on $\omega_{C_d} (-1)$ is essential, see \cite[Remark 5.5]{CH12} for a discussion on this issue. 

Constructions of rank--$3$ Ulrich bundles on $V_3$ and $V_4$ using syzygies of algebraic curves and the Hartshorne-Serre correspondence are well understood and are contained in the paper by Casanellas-Hartshorne-Gei{\ss}-Schreyer \cite[Proposition 5.4, Appendix A]{CH12}, and in an unpublished appendix to the paper by Cho, the second named author, and Lee \cite{CKL21}. The key idea behind these constructions was first made clear by Gei{\ss} in his Ph. D. dissertation \cite[Section 3.4.2]{Gei13}, in relation with the rationality problem for the moduli space of curves $\mathcal M_g$ and for the Hurwitz space $\mathcal H(k, 2g+2k-2)$, for small values of $g,k$.

\medskip

We briefly describe these constructions below. 
\begin{enumerate}[(i)]
\item Construct a random curve $C_d$ of genus $g=2d+4$ together with a base-point-free pencil $\mathfrak g^1_{d+3}$ using a bigraded projective model in $\p^1 \times \p^2$. Note that such a pencil of degree $d+3$ always exists thanks to the Brill-Noether theory.

\item The linear system $|\omega_{C_d} - \mathfrak g^1_{d+3}|$ is a very ample $\mathfrak g^{d+1}_{3d+3}$ on $C_d$. Verify that $C_d \subset \p^{d+1}$ is contained in a smooth del Pezzo threefold $V_d$.

\item Verify that $C_d$ induces an Ulrich bundle of rank $3$ via the Hartshorne-Serre correspondence.

\item Verify that the incidence family $\{ (C_d, V_d ) ~ \vert ~ C_d \subset V_d \}$ from this construction dominates the space of del Pezzo threefolds $\{ V_d \}$.
\end{enumerate}

As a result, we see that a general $V_d$ carries rank--$3$ Ulrich bundles for $d=3$ and for $d=4$.

\medskip

The case of the del Pezzo threefold $V_5 \subset \p^6$ of degree $5$, the central object in this paper, is treated separately. First note that $V_5 = \Gr(2, 5) \cap H_1 \cap H_2 \cap H_3 \subset \p^6$ is a smooth $\p^6$-section of the Grassmannian $\Gr (2,5) \subset \p^9$ of lines in $\p^4$. From the Euler sequence over the Grassmannian $\Gr(2,5)$, we have the following short exact sequence of vector bundles on $V_5$
\[
0 \to \mathcal U \to V \otimes \mathcal O_{V_5} \to \mathcal Q \to 0
\]
so that $\mathcal U$ is the restriction of the universal subbundle on $\Gr(2,5)$ and $\mathcal Q$ is the restriction of the quotient bundle on $Gr(2,5)$. Thanks to the Bott formula, one readily verifies that the bundles $\mathcal U$ and $\mathcal Q^{\vee}$ are ACM bundles on $V_5$. Furthermore, Orlov showed that there is a full exceptional collection in the derived category of coherent sheaves on $V_5$ \cite{Orl91}, namely,
\[
D^b (V_5) = \langle \mathcal U, \mathcal Q^{\vee}, \mathcal O_{V_5}, \mathcal O_{V_5} (1) \rangle.
\]
We refer to \cites[Section 3]{Fae05}[Section 2.3]{LP21}~  for more details on the notion and computations for cohomology groups of these bundles. 

It is well-known that $V_5$ supports an Ulrich bundle $\mathcal E$ of rank $3$, namely, $\mathrm{Sym}^2 (\mathcal U^{\vee})$ and its deformations. Note also that there is a $10$-dimensional family of stable Ulrich bundles of rank $3$ from the computation $h^1 (\mathcal E \otimes \mathcal E^{\vee}) = 10$ \cites[Example 4.4]{AC00}[Proposition 6.8]{Fae05}[Theorem 1.1]{LP21}. In particular, Ulrich bundles on $V_5$ have a very nice presentation using universal bundles as follows. 

\begin{prop} \cite[Proposition 3.4]{LP21} \label{prop:PresentationUlrichOnV5}
For any $r \ge 2$, an Ulrich bundle $\mathcal E$ of rank $r$ on $V_5$ corresponds to a quiver representation and fits into the short exact sequence
\[
0 \to \mathcal U^{\oplus r} \to {\mathcal Q^{\vee}}^{\oplus r} \to \mathcal E (-1) \to 0.
\]
\end{prop}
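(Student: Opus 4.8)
The plan is to run the Beilinson-type resolution attached to Orlov's exceptional collection $D^b(V_5) = \langle \mathcal U, \mathcal Q^\vee, \mathcal O_{V_5}, \mathcal O_{V_5}(1)\rangle$ and feed in the Ulrich vanishing conditions. First I would replace $\mathcal E$ by its twist $\mathcal F := \mathcal E(-1)$, since the statement is cleaner there, and record what ``Ulrich of rank $r$'' says cohomologically: $H^\bullet(V_5, \mathcal E(-j)) = 0$ for $j = 1,2,3$ and, by Serre duality together with $\omega_{V_5} = \mathcal O_{V_5}(-2)$, also the vanishing of the dual batch. Translating through $\mathcal E = \mathcal F(1)$ and using the Ulrich Hilbert polynomial (an Ulrich bundle of rank $r$ on $V_5$ has $h^0(\mathcal E) = 5r$ and $\chi(\mathcal E(t))$ equal to $5r\binom{t+3}{3}$-type growth), one gets a complete list of the cohomology of the twists $\mathcal F, \mathcal F(-1)$ and of $\mathcal F \otimes (\text{members of the dual collection})$.

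Next I would invoke the standard consequence of having a full strong exceptional collection: every object of $D^b(V_5)$ has a finite filtration (equivalently, a spectral sequence / a convolution) whose graded pieces are the collection objects tensored with the cohomology of $\mathcal F$ against the dual exceptional objects. Concretely, there is a complex with terms built from $\mathcal U, \mathcal Q^\vee, \mathcal O_{V_5}, \mathcal O_{V_5}(1)$ and multiplicities given by the relevant $\ext$/$\Hom$ groups, converging to $\mathcal F$. The key step is then a \emph{bookkeeping computation}: show that for an Ulrich $\mathcal E$ all the multiplicities attached to $\mathcal O_{V_5}$ and $\mathcal O_{V_5}(1)$ vanish, and that the ones attached to $\mathcal U$ and $\mathcal Q^\vee$ are each exactly $r$ and sit in two consecutive cohomological degrees, so that the convolution collapses to a two-term complex
\[
0 \to \mathcal U^{\oplus r} \to {\mathcal Q^\vee}^{\oplus r} \to \mathcal F \to 0 .
\]
The fact that $\mathcal U$ and $\mathcal Q^\vee$ are ACM on $V_5$ (already noted in the excerpt, via Bott) is what kills the ``wrong'' intermediate terms; the Ulrich conditions on $\mathcal E$ are what force the $\mathcal O_{V_5}$- and $\mathcal O_{V_5}(1)$-multiplicities to be zero; and the rank-$r$ count drops out of comparing ranks and first Chern classes on the two-term sequence (rank: $-r + 2r = r = \rank \mathcal F$; $c_1$: the universal sub- and quotient bundles on $\Gr(2,5)$ satisfy $c_1(\mathcal U) = -c_1(\mathcal Q^\vee) = -\mathcal O(1)$, giving $c_1(\mathcal F) = 0$, consistent with $c_1(\mathcal E) = r\,\mathcal O_{V_5}(1)$ for an Ulrich bundle).

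The main obstacle I anticipate is purely organizational rather than deep: correctly identifying the dual (left or right) exceptional collection to $\langle \mathcal U, \mathcal Q^\vee, \mathcal O_{V_5}, \mathcal O_{V_5}(1)\rangle$ and being careful about which graded pieces appear in which cohomological degree, so that the spectral sequence genuinely degenerates to a single map $\mathcal U^{\oplus r}\to{\mathcal Q^\vee}^{\oplus r}$ rather than to a longer complex with a priori nonzero differentials further out. Once the multiplicities are pinned down this is forced, but it requires assembling all the cohomology of $\mathcal U, \mathcal Q^\vee$ and their pairings — most of which is available in the references \cite{Fae05, LP21} cited just above — in one table. The final sentence, that such an $\mathcal E$ ``corresponds to a quiver representation,'' is then immediate: the two-term complex is exactly the datum of a morphism in the category of representations of the Beilinson quiver of $V_5$ with the relations coming from the multiplication $V^\vee \otimes \mathcal U \to \mathcal Q^\vee$, and one checks the cokernel is locally free (hence an honest bundle) by a dimension count on the degeneracy locus of a general such morphism, which is empty for dimension reasons. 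I would simply cite \cite[Proposition 3.4]{LP21} for the details, as the excerpt already does.
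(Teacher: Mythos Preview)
The paper does not give its own proof of this proposition: it is stated with an explicit citation to \cite[Proposition 3.4]{LP21} and is used as a black box later in the proof of Theorem~\ref{thm:MainThmV5}. So there is nothing in the paper to compare your argument against beyond the citation itself.

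That said, your outline is essentially the argument carried out in \cite{LP21}: one runs the Beilinson spectral sequence for Orlov's full exceptional collection $\langle \mathcal U, \mathcal Q^\vee, \mathcal O_{V_5}, \mathcal O_{V_5}(1)\rangle$, uses the Ulrich cohomology vanishing to kill the $\mathcal O_{V_5}$ and $\mathcal O_{V_5}(1)$ terms, and is left with a two-term complex $\mathcal U^{\oplus a}\to(\mathcal Q^\vee)^{\oplus b}$ whose multiplicities are pinned down by rank and Chern class considerations. Your closing remark, that one should simply cite \cite[Proposition 3.4]{LP21}, matches exactly what the paper does. One small caution: your parenthetical Chern-class check has a sign slip (on $\Gr(2,5)$ one has $c_1(\mathcal Q^\vee)=-c_1(\mathcal Q)=-\mathcal O(1)$, not $+\mathcal O(1)$), though the conclusion $c_1(\mathcal F)=0$ still comes out correctly once the signs are straightened.
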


We now consider an analogous construction using the Hartshorne-Serre correspondence. Note that two generic global sections of an Ulrich bundle $\mathcal E$ of rank $3$ degenerates along an ACM curve $C \subset V_5$ which fits into the short exact sequence
\[
0 \to \mathcal O_{V_5}^{\oplus 2} \to \mathcal E \to \mathcal I_{C/V_5} (3) \to 0.
\]
From this short exact sequence, we may read off algebraic information about $C$ as follows.

\begin{prop}
\label{prop:BettiOfACMCurve} 
Notation as above.
$C$ is an ACM curve of genus $2d+4 = 14$ and degree $3d+3 = 18$ in $\p^6$. The Betti table of $C$ is given by 
\[
\begin{array}{c|cccccc}
j \setminus i & 0&1&2&3&4&5 \\
\hline
0 & 1&-&-&-&-&- \\
1 & -&5&5&-&-&- \\
2 & -&13&45&56&25&- \\
3 & -&-&-&-&-&2
\end{array}
\]
In particular, $C$ can only lie on one single del Pezzo threefold in $\mathbb P^6$.
\end{prop}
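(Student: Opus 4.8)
The plan is to reduce the assertion to a comparison of two $5$-dimensional spaces of quadrics. First I would extract from the Betti table the only numerical fact that matters here: $\beta_{1,1}(C)=0$ and $\beta_{1,2}(C)=5$. Thus the homogeneous ideal of $C$ contains no linear form (so $C$ is linearly normal and nondegenerate in $\p^6$) and is generated in degree $2$ by exactly five quadrics. Since $C$ is ACM its homogeneous ideal is saturated, so $h^0(\p^6,\mathcal I_{C/\p^6}(2))=\dim(I_C)_2=5$; equivalently, there is a precisely $5$-dimensional space $H^0(\p^6,\mathcal I_{C/\p^6}(2))$ of quadrics through $C$.

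Next I would recall the classical equations of a quintic del Pezzo threefold. By the classification recalled above, any del Pezzo threefold in $\p^6$ is a linear section $V_5=\Gr(2,5)\cap\p^6$, and $\Gr(2,5)\subset\p^9$ is cut out scheme-theoretically by the five $4\times 4$ Pfaffians of the generic skew-symmetric matrix of Plücker coordinates, with minimal free resolution the self-dual complex
\[
0\to\o_{\p^9}(-5)\to\o_{\p^9}(-3)^{\oplus 5}\to\o_{\p^9}(-2)^{\oplus 5}\to\o_{\p^9}\to\o_{\Gr(2,5)}\to 0 .
\]
Since $\Gr(2,5)$ is ACM, a general $\p^6$ meets it properly and the resolution restricts to
\[
0\to\o_{\p^6}(-5)\to\o_{\p^6}(-3)^{\oplus 5}\to\o_{\p^6}(-2)^{\oplus 5}\to\mathcal I_{V_5/\p^6}\to 0 ,
\]
so $V_5$ is ACM, is cut out scheme-theoretically by five quadrics, and $h^0(\p^6,\mathcal I_{V_5/\p^6}(2))=5$. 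In particular $V_5$ is the scheme-theoretic base locus of $|\mathcal I_{V_5/\p^6}(2)|$, hence is recovered from its space of quadrics.

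Finally I would compare the two systems. If $C\subset V_5$, then every quadric through $V_5$ contains $C$, so $H^0(\p^6,\mathcal I_{V_5/\p^6}(2))\subseteq H^0(\p^6,\mathcal I_{C/\p^6}(2))$; both are $5$-dimensional by the two steps above, so they coincide. Hence the $5$-dimensional space of quadrics through any del Pezzo threefold containing $C$ equals the space of quadrics through $C$, which depends only on $C$; and since each such $V_5$ is the scheme-theoretic intersection of the quadrics it lies on, all of them are equal. Therefore $C$ lies on at most one del Pezzo threefold in $\p^6$. The only genuinely non-formal ingredient is the middle step: it is not enough that $V_5$ lie on five independent quadrics, one needs that these already cut it out as a scheme and span $(I_{V_5})_2$, so that $V_5$ is reconstructed from them — this is exactly where the Pfaffian structure of $\Gr(2,5)$ and the exactness of its minimal resolution under a general hyperplane section enter. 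Everything else is a count of $5$-dimensional vector spaces.
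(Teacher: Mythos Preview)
Your proposal addresses only the ``In particular'' clause and treats the Betti table as input rather than as something to be proved. The proposition asserts two things: (a) that the Betti table of $C$ is the displayed one, and (b) that consequently $C$ lies on at most one del Pezzo threefold. You extract $\beta_{1,2}(C)=5$ from the table and then argue (b), but you never establish (a), and without (a) you have no independent reason to know that $h^0(\mathcal I_{C/\p^6}(2))=5$, which is the single numerical fact your comparison argument rests on. The paper's proof is almost entirely devoted to (a): it writes down the minimal resolutions over $\p^6$ of $2\mathcal O_{V_5}$ and of the Ulrich bundle $\mathcal E$, applies a mapping cone to the sequence $0\to 2\mathcal O_{V_5}\to\mathcal E\to\mathcal I_{C/V_5}(3)\to 0$ to obtain a resolution of $\mathcal I_{C/V_5}(3)$, then combines this with the resolution of $\mathcal I_{V_5/\p^6}$ via $0\to\mathcal I_{V_5/\p^6}\to\mathcal I_{C/\p^6}\to\mathcal I_{C/V_5}\to 0$, and finally checks minimality by observing that the five quadrics through $C$ coincide with those of $V_5$ and hence already carry five linear syzygies.

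For part (b) itself your argument is correct and is exactly the paper's, only spelled out in more detail: once one knows $C$ lies on precisely five independent quadrics, any $V_5\supset C$ contributes its own five quadrics to $(I_C)_2$, hence all of them, and since $V_5$ is cut out scheme-theoretically by its five Pfaffian quadrics it is recovered from $C$.
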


\begin{proof}
The minimal free resolutions of $2 \mathcal O_{V_5}$ and of $\mathcal E$ are
\[
0 \to 2 \mathcal O_{\p^6}(-5) \to 10 \mathcal O_{\p^6} (-3) \to 10 \mathcal O_{\p^6}(-2) \to 2 \mathcal O_{\p^6} \to 2 \mathcal O_{V_5} \to 0,
\]
and
\[
0 \to 15 \mathcal O_{\p^6}(-3) \to 45 \mathcal O_{\p^6}(-2) \to 45 \mathcal O_{\p^6} (-1) \to 15 \mathcal O_{\p^6} \to \mathcal E \to 0,
\]
respectively. The mapping cone gives a free resolution of $\mathcal I_{C/V_5}(3)$ as
\[
0 \to 2 \mathcal O_{\p^6}(-5) \to 25 \mathcal O_{\p^6}(-3) \to 55 \mathcal O_{\p^6} (-2) 
\]
\[
\to 45 \mathcal O_{\p^6}(-1)  \to 13 \mathcal O_{\p^6} \to \mathcal I_{C/V_5}(3) \to 0.
\]

We recover a free resolution of $C$ in $\p^6$ from the short exact sequence $0 \to \mathcal I_{V_5/\p^6} \to \mathcal I_{C/\p^6} \to \mathcal I_{C/V_5} \to 0$, and note that its Betti table coincides with the Betti table in the conclusion. Since $C \subset V_5$, the ideal of $C$ and $V_5$ share the same quadrics, and thus there are $5$ linear syzygies among these quadrics, hence, the free resolution of $C$ we obtained above is indeed minimal.

The last part of the conclusion follows immediately again by the fact that the ideal of $V_5$ in $\mathbb P^6$ is generated by five quadrics.
\end{proof}

Note that if we have such a curve $C \subset V_5$, then we can reverse the construction and obtain an Ulrich bundle of rank $3$ on $V_5$. To apply the Hartshorne-Serre correspondence, we only need to show that $\omega_C(-1)$ has two global sections which generate the graded module $H_{\ast}^0 (\omega_C)$. Since $C$ is an ACM curve, it suffices to establish the surjectivity of the multiplication maps
\[
H^0 (C, \omega_C (-1)) \otimes H^0 (C, \mathcal O_C (j)) \to H^0 (C, \mathcal \omega_C (j-1))
\]
for all $j > 0$. Thanks to the base--point--free pencil trick, it reduces to the surjectivity of the multiplication map 
\[
\mu : H^0 (C, \omega_C(-1)) \otimes H^0 (C, \mathcal O_C (1)) \to H^0 (C, \omega_C),
\]
which, by dimension reasons, is equivalent to the injectivity of $\mu$ in this case. By considering the dual resolution of $C$ obtained by applying $\Hom_{\p^6} ( -, \omega_{\p^6})$ which gives a minimal free resolution for $\omega_C$, we notice that the line bundle $\omega_C (-1)$ is globally generated by two sections. Furthermore, the kernel of the multiplication map $\mu$ equals to the Koszul cohomology group $K_{1,-1} (C, \omega_C, \mathcal O_C(1))$. By Green's duality \cite[Theorem 2.c.6]{Gre84}, \cite[p. 188]{Gre89}, we have an isomorphism
\[
K_{1,-1} (C, \omega_C, \mathcal O_C (1)) \simeq K_{4,3}(C, \mathcal O_C (1))^{\vee} = 0.
\]
Hence, such a curve $C$ provides a rank--$3$ bundle $\mathcal E$ on $V_5$ as an extension $0 \to \mathcal O_{V_5}^{\oplus 2} \to \mathcal E \to \mathcal I_{C/V_5} (3) \to 0$. It is straightforward that this $\mathcal E$ is Ulrich.

As pointed out in \cite[Section 5]{CFK23}, such a family of ACM curves $C \subset \p^6$ associated to rank $3$ Ulrich bundles on $V_5 \subset \p^6$ cannot dominate $\mathcal M_{14}$. This fact distinguishes the del Pezzo threefold $V_5$ from $V_3$ and $V_4$. 
Hence, it is natural to ask the following question.

\begin{ques}\label{ques:MainQuestion}
Describe geometric conditions on a curve $C$ of genus $14$ so that $C$ can be embedded into $C \hookrightarrow V_5 \subset \p^6$ and $C$ is the degeneracy locus of two sections of some Ulrich bundles of rank $3$ on $V_5$.
\end{ques}

Asnwering this question is the main objective of this paper.

\section{Vanishing resonance and embeddings of curves into $V_5$}
\label{sec:Res}

We show next that the answer to Question \ref{ques:MainQuestion} is related to the existence of a certain rank--2 vector bundle with trivial resonance. The first part of the section is algebraic in nature and is applicable to various other situations. We shall utilize these algebraic facts in the proof of the main result of the paper, Theorem \ref{thm:MainThmV5} that represents the content of the second part of this section.

\subsection{Resonance of a skew--symmetric matrix} We recast here the definition of resonance from \cite{PS15}. Instead of working with subspaces, for practical reasons,  we prefer working with maps and matrices.
We consider $V$ an $n$--dimensional complex vector space with $n\ge 4$, $W$ another complex vector space, and $\partial:\wedge^2 V^\vee \to W^\vee$ a linear map. 
Denote the kernel of $\partial$ by $K^\perp$. We define the \emph{resonance variety of $\partial$} to be the affine variety
\[
\mathcal{R}(\partial):=\{a \in V^{\vee} ~ \vert ~ \exists ~ b \in V^{\vee} \text{ such that } a \wedge b \neq 0, \text{ and } \partial(a \wedge b)=0\} \cup \{0\}.
\]

It coincides with the usual resonance variety (see \cite{PS15}) $\mathcal R(V,K)$ of the pair $(V,K)$ where $K^\vee$ is the image of the map $\partial$. It was shown in \cite{AFRS23} that the resonance carries a natural scheme structure, however, we will not be concerned with this finer structure here, and we simply consider the reduced structure.

\begin{exmp}
    Let $X$ be a complex algebraic variety, and let $\mathcal F$ be a vector bundle on $X$. Consider the second determinant map 
    \[
    d_2 : \wedge^2 H^0 (X, \mathcal F) \to H^0 (X, \wedge^2 \mathcal F).
    \]
    The resonance variety $\mathcal R (X, \mathcal F)$ of $\mathcal F$ is by definition $\mathcal R(d_2)$. In the rank--two case, the map $d_2$ coincides with the determinant map 
    \[
    \det:\wedge^2 H^0 (X, \mathcal F) \to H^0 (X, \det (\mathcal F)).
    \]
    We refer to \cite{AFRW24} and \cite{AFRS23} for an extensive discussion on the resonance of vector bundles and several applications.
\end{exmp}

We fix a basis $\{x_1,\ldots,x_n\}$ in $V$, denote by $\{e_1,\ldots,e_n\}$ its dual basis in $V^\vee$, and put $z_{ij}=e_i\wedge e_j$. Denote also by $A=(\ell_{ij})_{i,j=\overline{1,n}}$ the skew--symmetric matrix of linear forms on $W$ associated to the map $\partial$ in the chosen bases, i.e. $\ell_{ij}:=\partial(e_i \wedge e_j)\in W^\vee$. 

Recall from \cite{KS89} that $A$ is said to \emph{have a generalized zero} if there exists an invertible matrix $U$ such that the skew--symmetric matrix $UAU^t$ has a zero off the diagonal. It means, up to a base--change in $V$, the matrix associated to $\partial$ has a zero off the diagonal.  A $4 \times 4$ Pfaffian of such a matrix $UAU^{t}$ is called a \emph{generalized Pfaffian of $A$}. 

The generalized Pfaffians are directly connected to the resonance, as shown below.

\begin{thm}\label{thm:ResonancePfaffianLemma}
	Notation as above. The following are equivalent:
	\begin{itemize}
		\item[(i)] The resonance $\mathcal{R}(\partial)$ is nontrivial;
		\item[(ii)] The matrix $A$ has a generalized zero;
		\item[(iii)] There exists a $4\times 4$ generalized Pfaffian of $A$ of rank at most four.
	\end{itemize}
	\end{thm}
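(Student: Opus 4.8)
\textbf{Proof plan for Theorem \ref{thm:ResonancePfaffianLemma}.}

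The plan is to prove the cycle of implications $(i)\Rightarrow(ii)\Rightarrow(iii)\Rightarrow(i)$, translating between the geometric language of the resonance variety, the matrix-theoretic notion of a generalized zero, and the algebraic notion of generalized Pfaffians. First I would unpack $(i)$: a nonzero $a\in\mathcal R(\partial)$ means there is $b\in V^\vee$ with $a\wedge b\neq 0$ and $\partial(a\wedge b)=0$. Choose a basis $f_1=a$, $f_2=b$ of the plane $\langle a,b\rangle$ and extend to a basis $f_1,\dots,f_n$ of $V^\vee$; this is exactly a change of basis $U$ (up to transpose conventions) under which the new matrix $UAU^t$ has the $(1,2)$-entry equal to $\partial(a\wedge b)=0$, i.e. $A$ has a generalized zero. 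Conversely $(ii)\Rightarrow$ the existence of such an off-diagonal zero, say in position $(1,2)$ after replacing $A$ by $UAU^t$; reading $a,b$ as the first two dual basis vectors in the new basis gives $a\wedge b\neq 0$ and $\partial(a\wedge b)=0$, so $\mathcal R(\partial)$ is nontrivial. This already establishes $(i)\Leftrightarrow(ii)$ by a direct dictionary argument, so the substantive content is the link with $(iii)$.

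For $(ii)\Rightarrow(iii)$: assume $B:=UAU^t$ has a zero off the diagonal, WLOG in position $(1,2)$. I would consider the $4\times 4$ submatrix $B'$ of $B$ on rows and columns $\{1,2,3,4\}$ — more precisely, the $4\times 4$ generalized Pfaffian attached to the index set $\{1,2,3,4\}$ — and show its rank is at most four. But a $4\times4$ skew-symmetric matrix always has even rank, hence rank $0$, $2$, or $4$; so the claim ``rank at most four'' is automatic for a genuine $4\times4$ submatrix. The point must therefore be subtler: a \emph{generalized Pfaffian of $A$} as defined in the text is a $4\times4$ Pfaffian of $UAU^t$, which is a quadratic form (degree-$2$ polynomial) in the linear coordinates on $W$ — so ``rank at most four'' refers to the rank of this \emph{quadratic form} on $W^\vee$ (a Pfaffian of a $4\times4$ skew matrix of linear forms is $\ell_{12}\ell_{34}-\ell_{13}\ell_{24}+\ell_{14}\ell_{23}$, a sum of three products of linear forms, hence a quadratic form of rank at most six in general, and at most four once one of the three products drops out). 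So the argument is: the generalized zero kills one of the three terms $\ell_{ij}\ell_{kl}$ in the relevant Pfaffian, leaving a sum of two products of linear forms, which has rank at most four as a quadratic form. This is the implication that carries the real idea and where I expect the main subtlety to lie — in pinning down exactly which $4\times4$ index set to use so that the surviving Pfaffian genuinely has the rank-four form, and in handling the degenerate cases where the surviving linear forms are dependent.

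For $(iii)\Rightarrow(i)$, and hence $(iii)\Rightarrow(ii)$: suppose some generalized Pfaffian $P=\ell_{12}\ell_{34}-\ell_{13}\ell_{24}+\ell_{14}\ell_{23}$ of $B=UAU^t$, viewed as a quadratic form on $W^\vee$, has rank $\le 4$. A quadratic form that is a combination of three products of linear forms but has rank $\le4$ must, after a linear change of the $\ell$'s (equivalently, after a further change of basis among $f_1,\dots,f_4$ and rescaling, using that $\mathrm{GL}$ acts on the $4\times 4$ skew matrices with the Pfaffian as relative invariant and $\mathrm{Pf}(UBU^t)=\det(U)\mathrm{Pf}(B)$), be brought to a form in which one of the three products vanishes identically — i.e. one can arrange $\ell_{12}'\equiv 0$ — which is precisely a generalized zero of $A$. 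I would make this rank argument precise via the classification of pencils/nets of the relevant shape, or more cleanly by invoking that a nondegenerate $4\times4$ skew form with entries linear forms whose Pfaffian has rank exactly six forces the three vectors $\ell_{12},\ell_{34}$, etc., to be independent; rank $\le 4$ forces a dependency that, transported through the $\mathrm{GL}_4$-action, yields the vanishing of a transformed entry. Once a generalized zero is produced we are back at $(ii)$, and by the already-established $(ii)\Leftrightarrow(i)$ the cycle closes. The main obstacle is thus the linear-algebra lemma relating ``rank of the Pfaffian quadric $\le4$'' to ``can annihilate an entry by a $\mathrm{GL}_4$-change of basis''; I would isolate it as a standalone sublemma and prove it by reducing to normal forms of $4\times4$ skew matrices with linear-form entries under congruence.
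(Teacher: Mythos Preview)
Your treatment of $(i)\Leftrightarrow(ii)$ and $(ii)\Rightarrow(iii)$ is correct and matches the paper's proof exactly: a basis change making the resonance witnesses $a,b$ into the first two basis vectors produces the generalized zero, and once $\ell_{12}=0$ the $4\times4$ Pfaffian on rows and columns $\{1,2,3,4\}$ becomes $\ell_{14}\ell_{23}-\ell_{13}\ell_{24}$, visibly a quadric of rank $\le 4$.

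The divergence, and the gap, is in $(iii)\Rightarrow(i)$. The paper's argument is short and geometric: reduce to $n=4$; then $\Gr(2,V)\subset\mathbb P(\wedge^2 V)$ is a smooth quadric $Q$ of rank $6$, whose projective dual is $\Gr(2,V^\vee)$. The Pfaffian of $A$, as a quadric on $W$, is the pullback of $Q$ along the map $W\to\wedge^2 V$ dual to $\partial$, so its rank equals the rank of $Q|_K$ where $K$ is the image. Now split on $\dim K$: if $\dim K\le 4$ then $\dim K^\perp\ge 2$ and the line $\mathbb P K^\perp$ meets the quadric $\Gr(2,V^\vee)$ for dimension reasons; if $\dim K=5$ then $K$ is a hyperplane and rank $Q|_K\le 4$ says exactly that $K$ is \emph{tangent} to $Q$, i.e.\ the point $[K^\perp]$ lies on the dual quadric $\Gr(2,V^\vee)$; and $\dim K=6$ would force rank $6$. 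In each admissible case $\mathbb P K^\perp\cap\Gr(2,V^\vee)\ne\emptyset$, which is the nontriviality of $\mathcal R(\partial)$.

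Your proposed route misses this. You argue that rank $\le 4$ ``forces a dependency'' among the $\ell_{ij}$ which can then be ``transported through the $\mathrm{GL}_4$-action'' to kill an entry. But a linear dependency among the six $\ell_{ij}$ is the same datum as a nonzero element of $K^\perp\subset\wedge^2 V^\vee$, and such an element is $\mathrm{GL}_4$-equivalent to a coordinate bivector $e_i\wedge e_j$ \emph{if and only if it is decomposable}, i.e.\ lies on $\Gr(2,V^\vee)$. A generic hyperplane $K$ (so $\dim K^\perp=1$) gives a single non-decomposable relation, and then no $\mathrm{GL}_4$-move will annihilate an entry; what singles out the hyperplanes for which the relation \emph{is} decomposable is precisely the tangency condition rank $Q|_K\le 4$. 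So the informal bridge from ``a dependency'' to ``a generalized zero'' presupposes what has to be proved. Your fallback of classifying $4\times4$ skew matrices of linear forms up to congruence would eventually recover the statement, since that classification amounts to classifying linear sections of a smooth $4$-dimensional quadric, but the clean formulation is the duality/tangency argument above rather than a normal-form enumeration.
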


\proof
(i) $\Rightarrow$ (ii). Let $e_1$ and $e_2$ be two non--collinear vectors in $V^\vee$ such that $\partial(e_1\wedge e_2)=0$. We complete $\{e_1,e_2\}$ to a basis $\{e_1,\ldots e_n\}$ of $V^\vee$. In this new basis, the matrix associated to $\partial$ has a zero element as its $(1,2)$-th entry, and hence $A$ has a generalized zero.

\medskip

(ii) $\Rightarrow$ (iii).  Without loss of generality, we may assume $\ell_{12}=0$. Consider the submatrix 
\begin{equation}
	\label{eqn:4x4Paffian}
A'=\left(
\begin{array}{cccc}
	0			&	\ell_{12}	&	\ell_{13}	&	\ell_{14}\\
	-\ell_{12}	& 	0			&	\ell_{23}	&	\ell_{24}\\ 
	-\ell_{13}	&	-\ell_{23}	&	0			&	\ell_{34}\\
	-\ell_{14}	&	-\ell_{24}	&	-\ell_{34}	&	0       
\end{array}
\right)
\end{equation}
of $A$. Since $\ell_{12}=0$ it follows that its Pfaffian equals $\ell_{14}\ell_{23}-\ell_{13}\ell_{24}$ which is a quadric of rank $\le 4$.    

\medskip

(iii) $\Rightarrow$ (i). Recall that the definition of the resonance is independent on the choice of a basis in $V$, and hence it only depends on the equivalence class of $A$. In particular, we may assume that $A$ has a $4\times 4$ Pfaffian of rank $\le 4$ and we want to prove that $\mathbb{P}K^\perp\cap \Gr(2,V^\vee)\ne\emptyset$ in $\mathbb{P}(\bigwedge^2V^\vee)$.

Without loss of generality, we may assume $n=4$ and $A$ looks like in (\ref{eqn:4x4Paffian}), i.e. $A=A'$. In this case, the Grassmanian $\Gr(2,V)$ is the quadric in $\mathbb{P}(\bigwedge^2V)$ given by the equation $Q=0$ where $Q=z_{12}z_{34}-z_{13}z_{24}+z_{14}z_{23}$. Its dual quadric is the Grassmannian  $\Gr(2,V^\vee)$.

We note that the dimension of $K$ cannot be six, for in this case, the rank of the Pfaffian would be six, too. If the dimension of $K$ is $\le 4$, then the dimension of $K^\perp$ would be at least two, and hence $\mathbb{P}K^\perp\cap \Gr(2,V^\vee)\ne\emptyset$.

We may assume then that $K$ is five--dimensional, and hence $\mathbb{P}K^\perp$ is a point in $\mathbb{P}(\bigwedge^2V^\vee)$ which corresponds to a hyperplane $H$ of $\mathbb{P}(\bigwedge^2V)$. We prove that this point belongs to $\Gr(2,V^\vee)$, equivalently $H$ is tangent to the dual quadric $\Gr(2,V)$.

Via the identification of $K$ with the hyperplane $H$ via the dual of $\partial$, the rank of the Pfaffian of $A$ equals the rank of $Q|_H$ and the hypothesis implies that the hyperplane section $H\cap \Gr(2,V)$ is a singular quadric. This is only possible when $H$ is tangent to the quadric $\Gr(2,V)$, which we wanted to prove.
\endproof

\begin{rem}
In the previous result, the notion of generalized Pfaffians when $\dim V > 4$ is used in an essential way. We notice that the ranks of $4\times 4$--Pfaffians might change in the equivalence class of $A$. Let 
\[
A = \begin{pmatrix}
0 & x_0 & x_1 & x_2 & x_3 \\
-x_0 & 0 & x_2 & x_3 & x_4 \\
-x_1 & -x_2 & 0 & x_4 & x_5 \\
-x_2 & -x_3 & -x_4 & 0 & x_0+x_5 \\
-x_3 & -x_4 & -x_5 & -x_0 - x_5 & 0
\end{pmatrix}
\]
be a $5 \times 5$ skew-symmetric matrix of linear forms in $\p^5$. One can immediately check that the five $4 \times 4$-Pfaffians of $A$ are quadrics of rank $\ge 5$ (indeed, two are of rank $6$ and the other three are of rank $5$). 

On the other hand, these Pfaffian quadrics define a smooth del Pezzo surface $S_5 \subset \p^5$ of degree $5$. Since every smooth del Pezzo surface of degree $5$ are projectively equivalent to each other, and the ideal defining $S_5$ can be generated by $5$ Pfaffian quadrics of rank $\le 4$ after a suitable base-change \cites[Theorem 6.1]{MP23}[Proposition 6.3-(2)]{KMP23}. 
\end{rem}

\subsection{Curves on $V_5$}
We focus next on finding geometric conditions on $C$ so that $C$ can be embedded into a del Pezzo threefold $V_5 \subset \p^6$. Let us recall the generic behavior of a curve of genus $14$ and its projective model in $\p^6$ embedded by a complete linear system $|\mathcal O_C(1)|$ of degree $18$. Since $\mathcal O_C(j)$ is nonspecial for $j \ge 2$, we may read off its Hilbert function so that the Hilbert series of $C$ is
\[
HS( \mathcal O_C) = \frac{1-5t^2-8t^3 + 45t^4 - 56 t^5 + 25t^6 - 2t^8}{(1-t)^7}.
\]
Hence, we expect that such a generic curve $C \subset \p^6$ has the Betti table
\begin{equation}
  \label{eqn:BettiGeneric}  
\begin{array}{c|cccccc}
j \setminus i & 0&1&2&3&4&5 \\
\hline
0 & 1&-&-&-&-&- \\
1 & -&5&-&-&-&- \\
2 & -&8&45&56&25&- \\
3 & -&-&-&-&-&2
\end{array}
\end{equation}

These $5$ quadrics define a reducible curve of degree $32$ containing $C$ which is a union of $C$ and a residual curve $P$ of genus $8$ and degree $14$. Indeed, a general curve $C \subset \p^6$ of genus $14$ and degree $18$ has the same Betti table as above, and is obtained as the residual curve of a paracanonical curve $P \subset \p^6$ of genus $8$ and degree $14$ \cite[Theorem 4.5, 5.14]{Ver05}.

The key issue is to develop a criterion to distinguish whether or not a given curve $C \subset \p^6$ can be embedded into a del Pezzo threefold $V_5 \subset \p^6$. If this is the case, the $5$ quadrics defining $C$ will also define $V_5$, which is a linear section of the Grassmannian $\Gr(2,5)$. In particular, these quadrics can be obtained as Pfaffians of a $5 \times 5$ skew--symmetric matrix by the Buchsbaum-Eisenbud structure theorem \cite[Theorem 2.1]{BE77}. Hence, the core of the question is knowing how to extract information about Pfaffian quadrics that can generate (linear sections of) the Grassmannian $\Gr(2,5)$ from a given curve $C \subset \p^6$. We see that the resonance of a certain vector bundle on $C$ nicely captures the required information as in the next few paragraphs.

\begin{thm}\label{thm:MainThmV5}
Let $C \subset \p^6$ be an ACM curve of genus $14$, degree $18$, and $K_{4,3} (C, \mathcal O_C(1)) = 0$. Then the following are equivalent:
\begin{enumerate}[(i)]
\item $C \subset V_5$ for some del Pezzo threefold $V_5 \subset \p^6$;
\item $C \subset V_5$ for some del Pezzo threefold $V_5$, and is the zero locus of $2$ global sections of an Ulrich bundle $\mathcal E$ of rank $3$ on $V_5$;
\item there is a rank $2$ vector bundle $\mathcal F$ on $C$ such that $\wedge^2 \mathcal F = \mathcal O_C (1)$, $h^0 (C, \mathcal F) = 5$, and $\mathcal R(C, \mathcal F) = 0$.
\end{enumerate}
\end{thm}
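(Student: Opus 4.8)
The plan is to establish the implications (ii) $\Rightarrow$ (i), (i) $\Rightarrow$ (ii), (i) $\Rightarrow$ (iii) and (iii) $\Rightarrow$ (i); together they give the three equivalences. The implication (ii) $\Rightarrow$ (i) is trivial. For (i) $\Rightarrow$ (ii) I would invoke the discussion at the end of Section \ref{sec:Rk3Ulrich}: once $C\subset V_5$, the dual of the minimal free resolution of $C$ exhibits $\omega_C(-1)$ as globally generated by its two global sections, while $K_{4,3}(C,\mathcal O_C(1))=0$ is, by Green's duality, exactly the vanishing $K_{1,-1}(C,\omega_C,\mathcal O_C(1))=0$ of the kernel of the multiplication map $\mu\colon H^0(\omega_C(-1))\otimes H^0(\mathcal O_C(1))\to H^0(\omega_C)$; the base-point-free pencil trick then makes the Hartshorne--Serre correspondence produce a rank-$3$ Ulrich bundle $\mathcal E$ having $C$ as the zero locus of two of its sections.

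For (i) $\Rightarrow$ (iii) I would take $\mathcal F:=\mathcal U^\vee|_C$, where $\mathcal U$ is the universal subbundle on $V_5=\Gr(2,5)\cap\mathbb P^6$. Then $\wedge^2\mathcal F=\det(\mathcal U^\vee)|_C=\mathcal O_{\Gr}(1)|_C=\mathcal O_C(1)$. A computation with the Bott formula on $V_5$ --- using $\mathcal I_{C/V_5}(3)=\mathcal E/\mathcal O_{V_5}^{\oplus 2}$, Proposition \ref{prop:PresentationUlrichOnV5} and the fact that $\mathcal U$ is ACM --- gives $H^0(V_5,\mathcal I_{C/V_5}\otimes\mathcal U^\vee)=H^1(V_5,\mathcal I_{C/V_5}\otimes\mathcal U^\vee)=0$, so $H^0(C,\mathcal F)$ is identified with $H^0(V_5,\mathcal U^\vee)=H^0(\Gr(2,5),\mathcal U^\vee)$, a $5$-dimensional space. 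Under this identification the determinant map $d_2\colon\wedge^2 H^0(C,\mathcal F)\to H^0(C,\mathcal O_C(1))$ is the Plücker isomorphism $\wedge^2 H^0(\mathcal U^\vee)\cong H^0(\Gr,\mathcal O(1))$ followed by restriction to $V_5$ and then to $C$; hence its skew matrix $A$ of linear forms is, up to congruence, the Buchsbaum--Eisenbud matrix whose five $4\times4$ Pfaffians generate the homogeneous ideal of $V_5\subset\mathbb P^6$. It then suffices to check that $A$ has no generalized zero, for Theorem \ref{thm:ResonancePfaffianLemma} gives $\mathcal R(C,\mathcal F)=\mathcal R(d_2)=0$. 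If not, after a congruence --- which transforms the five Pfaffians into an invertible linear combination of themselves and so preserves $V_5$ --- one entry $\ell_{12}$ vanishes identically; choose a point $p_0\in\mathbb P^6$ annihilating the six linear forms $\ell_{13},\ell_{14},\ell_{15},\ell_{23},\ell_{24},\ell_{25}$, which exists because six linear forms on $\mathbb P^6$ always have a common zero. One checks that all five Pfaffians of $A$ vanish at $p_0$ and that the three of them omitting the index $3$, $4$ or $5$ have vanishing differential there; hence the Jacobian of the five Pfaffians at $p_0$ has rank $\le 2 < 3 = \codim V_5$, so $V_5$ is singular at $p_0$, contradicting (i).

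For (iii) $\Rightarrow$ (i), the dimension count in the proof of Theorem \ref{thm:ResonancePfaffianLemma} shows that $\mathcal R(d_2)=0$ together with $h^0(C,\mathcal F)=5$ forces $d_2$ to be surjective: otherwise $\mathbb P(\ker d_2)$ would be a linear space of dimension $\ge 3$ and would meet $\Gr(2,5)\subset\mathbb P^9$, giving a nonzero decomposable vector in $\ker d_2$ and hence a nontrivial resonance. Surjectivity of $d_2$ forces $\mathcal F$ to be globally generated, for a base point would allow an elementary modification of $\mathcal F$ lowering $\det\mathcal F$, through which $d_2$ would then factor, so that $\operatorname{im} d_2\subseteq H^0(\mathcal O_C(1-D))\subsetneq H^0(\mathcal O_C(1))$ for some nonzero effective $D$ --- impossible as $\mathcal O_C(1)$ is very ample. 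Global generation yields a subbundle $\mathcal F^\vee\hookrightarrow H^0(\mathcal F)^\vee\otimes\mathcal O_C$ and thus a morphism $\psi\colon C\to\Gr(2,H^0(\mathcal F)^\vee)=\Gr(2,5)$ with $\psi^*\mathcal O_{\Gr}(1)=\mathcal O_C(1)$; since $d_2$ is onto, the composite of $\psi$ with the Plücker embedding is the embedding of $C$ by the complete system $|\mathcal O_C(1)|$ into a linearly embedded $\mathbb P^6\subset\mathbb P^9$, so $C$ (with its given projective structure) lies in $Y:=\Gr(2,5)\cap\mathbb P^6$. Finally $Y$ is smooth: if it were singular at $q=[\wedge^2\Lambda]\in Y$, then since $\Gr(2,5)$ is smooth the section $Y$ would be tangent to $\mathbb P^6$ at $q$, i.e. the conormal space $(T_q\Gr)^\perp$ --- a $\mathbb P^2$ consisting of the decomposable bivectors in $\wedge^2$ of a $3$-dimensional subspace of $H^0(\mathcal F)$ --- would meet $(\mathbb P^6)^\perp=\mathbb P(\ker d_2)$; any common point is a nonzero decomposable element of $\ker d_2$, hence a nonzero point of $\mathcal R(C,\mathcal F)$, contradicting (iii). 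Thus $Y$ is a smooth linear section of $\Gr(2,5)$ of codimension $3$, so $\omega_Y=\mathcal O_Y(-2)$ and $Y$ is a del Pezzo threefold of degree $5$ with Picard number $1$, necessarily $Y\cong V_5$; hence $C\subset V_5$.

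The step I expect to be the main obstacle is exactly the passage between the abstract vanishing $\mathcal R(C,\mathcal F)=0$ and the smoothness of the linear section $Y=\Gr(2,5)\cap\mathbb P^6$. The argument above handles this by exploiting both faces of the resonance at once: on the algebraic side, Theorem \ref{thm:ResonancePfaffianLemma} turns it into a statement about generalized Pfaffians of the Buchsbaum--Eisenbud matrix $A$, which combined with the elementary fact that six linear forms on $\mathbb P^6$ have a common zero produces a singular point of $V_5$ out of a generalized zero; on the geometric side, decomposable vectors in $\ker d_2$ are compared with the conormal geometry of $\Gr(2,5)$ to detect tangency of $\mathbb P^6$ along $Y$. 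The remaining ingredients --- the Bott-formula cohomology on $V_5$ yielding $h^0(C,\mathcal U^\vee|_C)=5$ and the identification of $H^0(C,\mathcal F)$ with the Plücker space, and the verification (already carried out in Section \ref{sec:Rk3Ulrich}) that the Hartshorne--Serre bundle is Ulrich --- are routine, but still need to be done.
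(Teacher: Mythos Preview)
Your argument is correct and follows the same overall architecture as the paper's proof: the cycle (i)$\Rightarrow$(ii) via Hartshorne--Serre, the choice $\mathcal F=\mathcal U^\vee|_C$ with the quiver presentation of $\mathcal E$ to get $h^0(C,\mathcal F)=5$, and the passage between $\mathcal R(C,\mathcal F)=0$ and smoothness of the linear section $\Gr(2,5)\cap\mathbb P^6$ through Theorem~\ref{thm:ResonancePfaffianLemma}. The two places where your route genuinely differs are worth noting. For the vanishing of the resonance in (i)$\Rightarrow$(iii), the paper invokes the external fact \cite[Proposition 6.3-(3)]{KMP23} that no quadric in $I(V_5)$ has rank $\le 4$ and appeals to condition (iii) of Theorem~\ref{thm:ResonancePfaffianLemma}; you instead use condition (ii) and produce, from a generalized zero $\ell_{12}=0$, an explicit point $p_0\in\mathbb P^6$ annihilating the six forms $\ell_{1k},\ell_{2k}$ ($k\ge 3$), at which three of the five Pfaffians have vanishing differential, forcing the Jacobian rank to drop below $3$. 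This is elementary and self-contained, and in effect re-proves the needed special case of \cite{KMP23}. For (iii)$\Rightarrow$(i), the paper appeals to the projective duality statement \cite[Proposition 2.24]{DK18} (cf.\ \cite[Lemma 3.3]{Kuz18}) to pass from $\mathbb P(\ker d_2)\cap\Gr(2,H^0(\mathcal F))=\emptyset$ to smoothness and transversality of $X=\Gr(2,H^0(\mathcal F)^\vee)\cap\mathbb P^6$; you unpack this by identifying the conormal space $(T_q\Gr)^\perp$ with $\wedge^2(\Lambda^\perp)\subset\wedge^2 H^0(\mathcal F)$, a $\mathbb P^2$ of decomposable bivectors, so that non-transversality at $q$ forces a decomposable vector in $\ker d_2$. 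You also make explicit the global generation of $\mathcal F$ (which the paper leaves inside the Koh--Stillman formalism). The trade-off is that the paper's citations give the result in one line and situate it in the broader framework of homological projective duality, while your direct arguments avoid external references and make the mechanism transparent.
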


\begin{proof}
(i) $\Rightarrow$ (ii). Since $C$ is ACM and $4$-regular, we must have 
\[
\dim K_{5,3} (C, \mathcal O_C(1))=2
\]
from the Hilbert function of $C \subset \p^6$. This implies that $\omega_C (-1)$ is globally generated and $h^0 (C, \omega_C (-1)) = 2$ by considering the dual resolution as we discussed above. Moreover, we have $K_{1,-1} (C, \omega_C, \mathcal O_C(1)) \simeq K_{4,3}(C, \mathcal O_C(1))^{\vee} = 0$, and thus the multiplication map
\[
\mu : H^0 (C, \omega_C (-1)) \otimes H^0 (\p^6, \mathcal O_{\p^6}(1)) \to H^0 (C, \omega_C)
\]
is an isomorphism. Hence, $C$ provides a vector bundle $\mathcal E$ of rank $3$ on $V_5$ so that
\[
0 \to \mathcal O_{V_5}^{\oplus 2} \to \mathcal E \to \mathcal I_{C/V_5} (3) \to 0
\]
by the Hartshorne-Serre correspondence. It is clear that $\mathcal E$ is Ulrich.

(ii) $\Rightarrow$ (iii). Recall that $V_5$ is a linear section of the Grassmannian $\Gr(2,5)$, so let us denote the (restriction of) universal subbundle on $V_5$ by $\mathcal U$. We choose $\mathcal F = \mathcal U^{\vee} \vert_C$ as the restriction of the dual of the universal bundle $\mathcal U$. Since $\wedge^2 \mathcal U^{\vee} = \mathcal O_{V_5} (1)$, the restriction $\mathcal F$ is a rank $2$ vector bundle with $\wedge^2 \mathcal F = \mathcal O_C (1)$ as well.

We first claim that $h^0 (V_5, \mathcal U^{\vee}) = h^0 (C, \mathcal F) = 5$. From the exact sequence
\[
0 \to \mathcal O_{V_5}^{\oplus 2} \to \mathcal E \to \mathcal I_{C/V_5} (3) \to 0,
\]
we have a $4$-term exact sequence
\[
0 \to \mathcal U^{\vee} (-3)^{\oplus 2} \to \mathcal E \otimes \mathcal U^{\vee} (-3) \to \mathcal U^{\vee} \to \mathcal F \to 0
\]
by tensoring $\mathcal U^{\vee} (-3)$. Since $H^1 (V_5, \mathcal U^{\vee}) = 0$, it is enough to show that $H^1 (V_5, \mathcal E \otimes \mathcal U^{\vee} (-3)) = H^2 (V_5, \mathcal U^{\vee} (-3)) = 0$ to verify the claim $h^0 (C, \mathcal F) = h^0 (V_5, \mathcal U^{\vee}) = 5$. It is well-known that the universal bundle $\mathcal U$ is an ACM bundle on $V_5$, and thus $\mathcal U^{\vee} \simeq \mathcal U (1)$ has no intermediate cohomology, in particular, $H^2 (V_5, \mathcal U^{\vee}(-3)) = 0$. For the vanishing of $H^1 (V_5, \mathcal E \otimes \mathcal U^{\vee} (-3))$, we use the correspondence between Ulrich bundles on $V_5$ and quiver representations in \cite[Proposition 3.4]{LP21}. In particular, we have the short exact sequence $0 \to \mathcal U^{\oplus 3} \to {\mathcal Q^{\vee}}^{\oplus 3} \to \mathcal E(-1) \to 0$, so the desired cohomology vanishing is provided when
\[
H^1 (V_5, \mathcal Q^{\vee} \otimes \mathcal U^{\vee} (-2)) = H^2 (V_5, \mathcal U \otimes \mathcal U^{\vee} (-2)) = 0.
\]
By the Serre duality, it is equivalent to the vanishing $H^2 (V_5, \mathcal Q \otimes \mathcal U) = H^1 (V_5, \mathcal U \otimes \mathcal U^{\vee}) = 0$. For the first vanishing, we have $H^2 (V_5, \mathcal Q \otimes \mathcal U) = \ext_{V_5}^2 (\mathcal Q^{\vee}, \mathcal U) = 0$ which is used to prove that $\mathcal U$ is left-orthogonal to $\mathcal Q^{\vee}$. The second vanishing is also the key for the fact that $\mathcal U$ is an exceptional bundle in $D^b (V_5)$, and thus $\ext_{V_5}^i (\mathcal U, \mathcal U) = 0$ for $i \neq 0$.

Next, we compute the resonance variety $\mathcal R (C, \mathcal F)$. Suppose that it is nonzero. Let $\det : \wedge^2 H^0 (C, \mathcal F) \to H^0 (C, \mathcal O_C(1)) \simeq H^0 (\p^6, \mathcal O_{\p^6} (1))$ be the determinant map. Note that $\mathcal F$ defines a nonzero Koszul cycle in $K_{2,1} (C, \mathcal O_C (1))$ as in \cites[Section 1]{Voi05}[Section 3.4.1]{AN10}. In particular, the $5$ quadrics in the ideal $I(C)$ of $C$ can be obtained as $4$-Pfaffians of the map $\det$. By Theorem \ref{thm:ResonancePfaffianLemma}, there is a $4$-Pfaffian of $\det$ of rank $\le 4$ among them. However, the quadrics in the ideal $I(C)$ and $I(V_5)$ are the same, and there is no quadric of rank $\le 4$ in $I(V_5)$ \cite[Proposition 6.3-(3)]{KMP23} which contradicts to the assumption. We conclude that the resonance $\mathcal R (C, \mathcal F) = 0$ is zero.

Note that such a vector bundle $\mathcal F$ does not exist over a general curve $C \subset \p^6$ of genus $14$ and degree $18$. Suppose that $C \subset \p^6$ has such a rank $2$ vector bundle $\mathcal F$. Then $\mathcal F$ defines a nonzero Koszul cycle in $K_{2,1} (C, \mathcal O_C(1))$, in particular, the $5$ quadrics in the ideal $I(C)$ of $C$ must allow a linear syzygy. On the other hand, the quadrics in the ideal of a general model $\widetilde{C} \subset \p^6$ of genus $14$ and degree $18$ have no linear syzygies among them. 

(iii) $\Rightarrow$ (i). Let $\det : \wedge^2 H^0 (C, \mathcal F) \to H^0 (C, \mathcal O_C (1)) \simeq H^0 (\p^6, \mathcal O_{\p^6} (1))$ be the determinant map, which is represented by a $5 \times 5$ skew-symmetric matrix composed of linear forms in $\p^6$. 
We follow the construction of Grassmannian syzygies introduced by Koh-Stillman \cite[Lemma 1.3]{KS89}. The five principal $4$-Pfaffians of $\det$ are contained in $I(C)$, we consider the variety $X$ defined by these Pfaffian quadrics which contains $C$ as its subvariety. It is clear that $X$ is a $\p^6$-linear section of the Grassmannian $\Gr(2,5)$, and hence it remains to show that $X$ is isomorphic to a smooth del Pezzo threefold $V_5 \subset \p^6$.

Since the resonance $\mathcal R (C, \mathcal F)$ is trivial, we see that the map $\det$ is non-degenerate and surjective by Theorem \ref{thm:ResonancePfaffianLemma}. Via this surjection, we have an inclusion $H^0 (\p^6, \mathcal O_{\p^6}(1))^{\vee} \subset \wedge^2 H^0 (C, \mathcal F)^{\vee}$. Moreover, $\mathcal R(C, \mathcal F)=0$ also implies
\[
Y = \mathbb{P} ( {H^0 (\mathcal O_{\p^6}(1))^{\vee}}^\perp ) ~ \cap~  \Gr(2, H^0 (\mathcal F)) 
\]
\[
= \p (\ker (\det)) ~ \cap ~ \Gr(2, H^0 (\mathcal F)) = \emptyset 
\]
in $\mathbb{P} (\wedge^2 H^0 (C, \mathcal F))$. 

By the projective duality, the linear section of the Grassmannian $X = \Gr(2, H^0 (C, \mathcal F)^{\vee}) \cap \mathbb{P} (H^0 (\p^6, \mathcal O_{\p^6}(1))^{\vee})$ is smooth and (dimensionally) transverse if and only if $Y$ is smooth and transverse \cite[Proposition 2.24]{DK18}. In our case, $H^0 (\p^6, \mathcal O_{\p^6}(1))^{\vee} \subset \wedge^2 H^0 (C, \mathcal F)^{\vee}$ is a linear subspace of codimension $3$, being $Y$ smooth and transverse is equivalent to $Y = \emptyset$ as similar as in \cite[Lemma 3.3]{Kuz18}. 
We conclude that $X$ is a smooth $3$-dimensional linear section of the Grassmannian $\Gr(2,5)$, and thus projectively equivalent to a smooth del Pezzo threefold $V_5 \subset \p^6$ \cite[Theorem II-1.1]{Isk80}. 
\end{proof}

\begin{rem} 
The condition $K_{4,3} (C, \mathcal O_C(1))=0$ is essential. Indeed, by following the instructions in \cite[Example 4.4]{AC00}, one can construct an ACM curve $C$ of genus $14$ and degree $18$ contained in a smooth del Pezzo threefold $V_5$ but $K_{4,3} (C, \mathcal O_C (1)) \neq 0$. In this example, two global sections of $\omega_C (-1)$ have a linear syzygy which implies $\dim K_{4,3} (C, \mathcal O_C (1)) = 1$, and such $C$ does not satisfy the generating condition to apply the Hartshorne-Serre correspondence. We refer to \cites[Remark 5.5]{CH12}[p. 17284]{CKL21}~ for details since the reasons are exactly the same.
\end{rem}

\begin{rem} By using similar techniques as in the proof above, one can also show that $h^0 (V_5, \mathcal U \otimes \mathcal U^{\vee}) = h^0 (C, \mathcal F \otimes \mathcal F^{\vee}) = 1$ where $\mathcal F$ is a rank $2$ vector bundle on $C$ that appeared in statement of the theorem. In particular, $\mathcal F$ is a simple vector bundle on $C$.
\end{rem}

\def\cprime{$'$} \def\cprime{$'$} \def\cprime{$'$} \def\cprime{$'$}
  \def\cprime{$'$} \def\cprime{$'$} \def\dbar{\leavevmode\hbox to
  0pt{\hskip.2ex \accent"16\hss}d} \def\cprime{$'$} \def\cprime{$'$}
  \def\polhk#1{\setbox0=\hbox{#1}{\ooalign{\hidewidth
  \lower1.5ex\hbox{`}\hidewidth\crcr\unhbox0}}} \def\cprime{$'$}
  \def\cprime{$'$} \def\cprime{$'$} \def\cprime{$'$}
  \def\polhk#1{\setbox0=\hbox{#1}{\ooalign{\hidewidth
  \lower1.5ex\hbox{`}\hidewidth\crcr\unhbox0}}} \def\cdprime{$''$}
  \def\cprime{$'$} \def\cprime{$'$} \def\cprime{$'$} \def\cprime{$'$}
\providecommand{\bysame}{\leavevmode\hbox to3em{\hrulefill}\thinspace}
\providecommand{\MR}{\relax\ifhmode\unskip\space\fi MR }
\providecommand{\MRhref}[2]{%
  \href{http://www.ams.org/mathscinet-getitem?mr=#1}{#2}
}

\vskip1cm

\end{document}